\newtheorem{theorem}{Theorem}[section]
\newtheorem{lemma}[theorem]{Lemma}
\newtheorem{corollary}[theorem]{Corollary}
\theoremstyle{definition}
\newtheorem{obs}[theorem]{Observation}
\newtheorem{definition}[theorem]{Definition}
\newtheorem{example}[theorem]{Example}
\theoremstyle{remark}
\newtheorem{remark}[theorem]{Remark}
\numberwithin{equation}{section}
\DeclareMathOperator{\tr}{tr}
\DeclareMathOperator{\jac}{Jac}
\DeclareMathOperator{\at}{\,{\rule[-3mm]{.1mm}{8mm}}\,}
\title[An SISP for Infinite Graphs]{A Structured Inverse Spectrum Problem for Infinite Graphs}
\author{Keivan Hassani Monfared}
\address{University of Calgary}
\email{k1monfared@gmail.com}
\author{Ehssan Khanmohammadi}
\address{Franklin and Marshall College}
\email{ehssan@fandm.edu}
\begin{document}

\begin{abstract}
	It is shown that for a given infinite graph $G$ on countably many vertices, and a compact, infinite set of real numbers $\Lambda$ there is a real symmetric matrix $A$ whose graph is $G$ and its spectrum is $\Lambda$. Moreover, the set of limit points of $\Lambda$ equals the essential spectrum of $A$, and the isolated points of $\Lambda$ are eigenvalues of $A$ with multiplicity one. It is also shown that any two such matrices constructed by our method are approximately unitarily equivalent.
\end{abstract}

\maketitle   
\keywords{Keywords: Inverse Spectrum Problem,  Jacobian Method, Infinite Graph, Spectrum, Essential Spectrum, Approximate Unitary Equivalence. \\

\subjclass{AMS MSC: 05C50,  15A18, 15A29, 47A10.}

\section{Introduction}
The inverse Sturm-Liouville problem has been studied to a great extent from several points of view. Essentially, it seeks to determine the density of a vibrating string from its natural frequencies (see \cite[p. 83]{borg46}, and \cite{krein51}). Similar but more complicated problems occur in other areas of science and engineering (see for example \cite{backusgilbert67} for problems arising from geophysics, and \cite{reconstructionappliedelectromagnetics} for an example in applied electromagnetics). Common approaches involve solving a finite discretization of the problem and proving that the solution of the discrete problem converges to the solution of the continuous problem. For example, Hald in his 1978 paper \cite{hald78} used the Rayleigh-Ritz method for calculating the eigenvalues of a two point boundary value problem, and reduced the inverse problem for the differential equation to a discrete inverse eigenvalue problem. 

It is of interest to analyze the vibrations of lumped parameter systems, that is, systems that are modelled as rigid masses joined by massless springs. In general, more accurate results are obtained by increasing the number of masses and springs, that is, by increasing the degrees of freedom. As the number of degrees of freedom is increased without limit, the concept of the system with distributed mass and elasticity is formed. Stokey \cite{stokey09} gives a thorough treatment of the direct problem when the degrees of freedom is infinite.

Throughout this paper all matrices have real entries. Let $A$ be an $n\times n$ symmetric matrix and $G$ a simple graph on $n$ vertices. For a set $S \subset [n] = \{1,2, \ldots, n\}$ let $A[S]$ denote the submatrix of $A$ obtained by deleting the rows and columns indexed by $[n] \setminus S$. Also let $G[S]$ denote the induced subgraph of $G$ on the vertices labeled by $S$. We say $G$ is the graph of $A$ when for $i\neq j$ we have $A_{ij} \neq 0$ if and only if vertices $i$ and $j$ are adjacent in $G$. Furthermore, for an infinite matrix $A$ whose rows and columns are indexed by $\mathbb{N}$, we say that an infinite graph $G$ on countably many vertices labeled by $\mathbb{N}$ is the graph of $A$ when $G[S]$ is the graph of $A[S]$ for any nonempty finite $S \subset \mathbb{N}$. Note that $G$ is not necessarily locally finite.

In a lot of cases, one seeks to find a symmetric tridiagonal matrix (a matrix whose graph is a disjoint union of paths) with prescribed eigenvalues \cite{boorgolub78}. In 1989 Duarte~\cite{duarte89} showed that the finite discrete problem has a solution whenever the graph of the matrix is a tree. The $\lambda$-structured-inverse-eigenvalue-problem ($\lambda$-SIEP) asks about the existence of a matrix whose graph and eigenvalues are given. In \cite{lambdamu13} the problem is described and solved when all the eigenvalues are distinct and the graph is finite. 

In this paper our goal is to establish analogous results for when the graph and hence the solution matrix are infinite. In order to do so, in Section \ref{SectionWSP} we introduce a property that captures a notion of \emph{genericity} for finite matrices in the settings of the Jacobian method \cite{lambdamu13}. Then in Section \ref{SectionFiniteLambdaSISP} we solve the finite $\lambda$-SIEP again but instead of solving the problem in one step, we find the solutions $A_n$ using induction on $n$, the number of vertices of the graph, and the Jacobian method. This approach enables us to control the norm of $A_n$'s in each step. Finally, in Section \ref{SecctionInfiniteLambdaSISP} we will show that the limit of $A_n$'s as $n$ approaches infinity exists and has the given graph and spectrum.

Throughout the paper $\circ$ denotes the Schur (entry-wise) product of two matrices, and the Lie bracket $[A, B]$ is the commutator of two matrices $A$ and $B$, that is $AB-BA$. A zero matrix of appropriate size is denoted by $O$, all vectors are written in bold small letters, and $\bm 0$ denotes a zero vector of appropriate size to the context.

\section{The Weak Spectral Property}\label{SectionWSP}
The problem studied in this paper is about the existence of a matrix with a given spectrum and the property that each off-diagonal entry of the matrix is prescribed to be either zero or nonzero with no restriction on the diagonal entries. The Jacobian method~\cite{lambdamu13} starts with a simple matrix with the given spectrum, and changes the entries slightly to obtain the desired pattern of zero and nonzero entries while keeping the spectrum intact. In this application of the Jacobian method we start with a diagonal matrix with the given spectrum, and change the off-diagonal entries one row and one column at a time, and adjust the diagonal entries. The process of showing that changing off-diagonal entries can be compensated with a change in the diagonal entries so that the spectrum remains intact involves using the Implicit Function Theorem (IFT). Checking the necessary conditions of the IFT for this problem involves a notion of robustness and genericity for the solution of the inverse problem we are interested in. A stronger version of this robustness is introduced in~\cite{sap15} as the Strong Spectral Property (SSP). In this section we introduce a property for finite matrices, similar to the SSP, which we call the Weak Spectral Property (WSP), and study some of its properties that are critical to our approach in solving the inverse problem.

\begin{definition}\label{D:PropertyP} 
	A symmetric matrix $A$ is said to have the \emph{Weak Spectral Property} (or $A$ has WSP for short) if $X=O$ is the only  symmetric matrix satisfying
	\begin{enumerate}[(1)]
	\item $X\circ I=O$, and \label{I:zero diagonal}
	\item $[X, A]=O$. \label{I:zero commutator}
	\end{enumerate}
\end{definition}
Note that any $1\times 1$ matrix has the WSP. When $n\ge 2$, an $n\times n$ scalar matrix cannot have the WSP, since any such matrix belongs to the center of the algebra of matrices. More generally, any matrix $A$ with the WSP cannot have a constant diagonal, because if all the diagonal entries of $A$ are equal to a constant $c$, then 
	\[ 	(A-cI)\circ I = O \text{ and }  [A-cI, A] = O. 	\]
Thus $X=A-cI$ is a nonzero solution of \eqref{I:zero diagonal} and \eqref{I:zero commutator} if $A$ is not a scalar matrix.
For $2\times 2$ matrices this necessary condition for the WSP is sufficient as well, and using this it is easy to give explicit examples to show that the WSP is not necessarily invariant under a change of basis.

\begin{obs}
A $2\times 2$ symmetric matrix has the WSP  if and only if it has distinct diagonal entries.
\end{obs}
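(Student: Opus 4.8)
The plan is to carry out a direct computation, exploiting the fact that in dimension two a symmetric matrix with zero diagonal has a single degree of freedom. Write the given matrix as $A=\begin{pmatrix} a & b \\ b & d\end{pmatrix}$. Any symmetric $X$ satisfying condition \eqref{I:zero diagonal} must be of the form $X=\begin{pmatrix} 0 & x \\ x & 0\end{pmatrix}$ for some real $x$, so condition \eqref{I:zero commutator} reduces to a single scalar equation in $x$. I would compute the commutator $[X,A]$ explicitly; the diagonal entries cancel and one finds
\[
[X,A] = \begin{pmatrix} 0 & x(d-a) \\ x(a-d) & 0\end{pmatrix}.
\]
Thus $[X,A]=O$ if and only if $x(d-a)=0$.

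The two directions of the equivalence then fall out immediately. If the diagonal entries are distinct, then $d-a\neq 0$, so $x(d-a)=0$ forces $x=0$ and hence $X=O$; this is precisely the assertion that $A$ has the WSP. Conversely, if $a=d$, then every $X$ of the above form satisfies both \eqref{I:zero diagonal} and \eqref{I:zero commutator}, and choosing any $x\neq 0$ produces a nonzero solution, so $A$ fails the WSP. This converse is also a special case of the general remark preceding the statement, since $a=d$ means that $A$ has a constant diagonal.

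There is essentially no serious obstacle here: the only point requiring a moment's care is the reduction in the first paragraph, namely that symmetry together with a zero diagonal pins $X$ down to the single parameter $x$, which is what makes the commutator condition collapse to one scalar equation whose solvability is governed precisely by whether $d-a$ vanishes.
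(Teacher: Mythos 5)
Your computation is correct: with $X$ forced into the one-parameter form by symmetry and the zero-diagonal condition, the commutator indeed reduces to the single equation $x(d-a)=0$, which gives both directions at once. The paper states this observation without proof, relying on the preceding constant-diagonal remark for the ``only if'' direction exactly as you note, so your argument supplies the omitted verification in essentially the intended way.
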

However for $n\times n$ matrices with $n \geq 2$ having distinct diagonal entries does not guarantee the WSP, as the following example illustrates.

\begin{example} Consider the matrices
	\[
	A=
	\begin{bmatrix}
	4&0&1\\
	0&3&0\\
	1&0&2
\end{bmatrix}
	\text{ and }
	B=
	\begin{bmatrix}
	4&1&0\\
	1&3&1\\
	0&1&2
\end{bmatrix}.
	\]
Then a short calculation shows that the matrix $A$ has the WSP but not the matrix $B$. 
\end{example}
The following lemma shows that the WSP is an \emph{open property}. In fact the proof shows that \emph{any} sufficiently small perturbation, symmetric or not, of a matrix with the WSP satisfies the conditions of Definition~\ref{D:PropertyP}.

\begin{lemma}[The WSP is an open property]\label{L:Property P is open} If a matrix $A$ has the WSP, then any sufficiently small symmetric perturbation of $A$ also has the WSP. 
\end{lemma}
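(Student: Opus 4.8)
The plan is to recast the Weak Spectral Property as the injectivity of a single linear operator that depends linearly---and hence continuously---on $A$, and then to invoke the fact that injectivity of a linear map between fixed finite-dimensional spaces is an open condition on the map. Concretely, let $\mathcal{Z}$ denote the space of $n\times n$ symmetric matrices with zero diagonal; this is a real vector space of dimension $\binom{n}{2}$. For a symmetric matrix $A$ I would define the linear operator
\[
    L_A \colon \mathcal{Z} \to \mathcal{W}, \qquad L_A(X) = [X,A],
\]
where $\mathcal{W}$ is the space of skew-symmetric matrices (note that $[X,A]$ is skew-symmetric whenever $X$ and $A$ are symmetric, since $[X,A]^\top = [A,X] = -[X,A]$), which also has dimension $\binom{n}{2}$. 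With this notation, conditions \eqref{I:zero diagonal} and \eqref{I:zero commutator} say exactly that $X \in \ker L_A$, so $A$ has the WSP if and only if $\ker L_A = \{O\}$, i.e.\ $L_A$ is injective.

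Next I would fix bases for $\mathcal{Z}$ and $\mathcal{W}$ and write $L_A$ as a square matrix $M(A)$ of size $\binom{n}{2}$. Since the commutator $[X,A] = XA - AX$ is linear in $A$ for each fixed $X$, every entry of $M(A)$ is a linear (in particular polynomial, hence continuous) function of the entries of $A$. Because $\mathcal{Z}$ and $\mathcal{W}$ have the same dimension, injectivity of $L_A$ is equivalent to $\det M(A) \neq 0$. Now $\det M(A)$ is a polynomial in the entries of $A$, so $\{A : \det M(A) \neq 0\}$ is open. If $A$ has the WSP then $\det M(A) \neq 0$, and therefore $\det M(A') \neq 0$ for every symmetric $A'$ sufficiently close to $A$; any such $A'$ then has $\ker L_{A'} = \{O\}$ and hence the WSP.

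To obtain the slightly stronger statement advertised before the lemma---that the conclusion survives perturbations that need not be symmetric---I would drop the skew-symmetry of the target and instead regard $L_{A'}(X) = [X,A']$ as a map $\mathcal{Z} \to \mathrm{Mat}_n(\mathbb{R})$, represented by an $n^2 \times \binom{n}{2}$ matrix whose entries are again linear in $A'$. Injectivity is now equivalent to this matrix having full column rank, i.e.\ to the nonvanishing of at least one of its maximal $\binom{n}{2} \times \binom{n}{2}$ minors; since $A$ has the WSP, some such minor is nonzero at $A$, and by continuity it remains nonzero for all $A'$ (symmetric or not) in a neighborhood of $A$. The computation here is entirely routine linear algebra; the only point that requires a moment's care---and the step I would treat as the crux---is the clean reformulation of Definition~\ref{D:PropertyP} as the injectivity of $L_A$, together with the observation that the relevant spaces are equidimensional, so that injectivity is detected by a single determinant (or, in the non-symmetric case, by maximal minors).
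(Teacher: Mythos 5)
Your proof is correct, but it takes a genuinely different route from the paper. The paper argues by contrapositive with a compactness argument: it assumes a sequence of perturbations $E_n$ with $\|E_n\|=1/n$ admitting nonzero hollow symmetric $X_n$ commuting with $A+E_n$, normalizes $\|X_n\|=1$, extracts a convergent subsequence on the (compact) unit sphere of matrices, and passes to the limit in the commutation relation to produce a nonzero violator for $A$ itself. You instead linearize the whole definition: $A$ has the WSP iff $L_A(X)=[X,A]$ is injective on the $\binom{n}{2}$-dimensional space of hollow symmetric matrices, and since the codomain of skew-symmetric matrices has the same dimension, this is detected by the nonvanishing of a single determinant whose entries are linear in $A$. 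Both arguments are sound (your dimension count, the skew-symmetry of $[X,A]$, and the reduction of the non-symmetric case to full column rank via maximal minors all check out). Your approach buys strictly more: it exhibits the set of matrices with the WSP as the complement of the zero set of a polynomial in the entries of $A$, hence Zariski-open, which in particular recovers openness and could in principle be made quantitative. The paper's soft compactness argument is basis-free and shorter to write, and it too delivers the remark that non-symmetric perturbations are allowed, since symmetry of $E_n$ is never used; but it yields only the topological statement.
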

\begin{proof}
We prove the contrapositive of the first statement. Assume that for each $n\in \mathbb{N}$ there exist nonzero symmetric matrices $E_n$ and $X_n$ such that 
	\begin{enumerate}[(1)]
	\item $X_n\circ I = O$,\label{I:diagonal relation}
	\item $[X_n, A+E_n] = O$, and\label{I:Commutation relation}
	\item $\|E_n\| = 1/n$.
	\end{enumerate}
With a scaling we can set $\|X_n\|=1$ for all values of $n$, and $X_n$ still would satisfy the properties \eqref{I:diagonal relation} and \eqref{I:Commutation relation}. Note that the set of matrices of norm one is compact in the set of all square matrices with the Euclidean topology. Hence we can substitute the sequence $\{X_n\}$, if necessary, with a convergent subsequence that we again denote by $\{X_n\}$. Hence, taking the limit of the commutation relation \eqref{I:Commutation relation} as $n\to \infty$ we obtain
	\[
	[\lim_{n\to \infty} X_n, A]
	=
	\lim_{n\to \infty}
		[X_n, A]
	=
	\lim_{n\to \infty}
		[E_n, X_n] = O.
	\]
This proves that $A$ does not have the WSP because the nonzero matrix $X=\displaystyle\lim_{n\to \infty} X_n$ satisfies the conditions of Definition~\ref{D:PropertyP}. 
\end{proof}

\begin{obs}
Note that a matrix $A$ has the WSP if and only if $A+cI$ has the WSP for any real number $c$. This implies that the set of symmetric matrices without the WSP has no isolated points.
\end{obs}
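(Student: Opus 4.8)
The plan is to treat the two assertions in turn, since the first drives the second. To show that $A$ has the WSP if and only if $A+cI$ does, I would begin from the fact that the identity matrix lies in the center of the matrix algebra, so $[X,cI]=O$ for every $X$. Then
\[
	[X,A+cI]=[X,A]+c[X,I]=[X,A],
\]
which means that the commutation condition \eqref{I:zero commutator} in Definition~\ref{D:PropertyP} is \emph{literally the same} constraint for $A$ and for $A+cI$. Since the diagonal condition \eqref{I:zero diagonal} does not involve $A$ at all, the set of symmetric matrices $X$ satisfying both conditions simultaneously is identical for $A$ and for $A+cI$. In particular this common solution set reduces to $\set{O}$ for one matrix exactly when it does for the other, so $A$ has the WSP precisely when $A+cI$ does.

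For the second assertion, I would let $\mathcal{N}$ denote the set of symmetric matrices failing the WSP and fix an arbitrary $A\in\mathcal{N}$. By the first part, $A+cI\in\mathcal{N}$ for every real $c$, so the entire affine line $\set{A+cI : c\in\mathbb{R}}$ is contained in $\mathcal{N}$. This line passes through $A$, and given any $\epsilon>0$ one can pick a nonzero $c$ with $\|cI\|<\epsilon$; the translate $A+cI$ is then a point of $\mathcal{N}$ distinct from $A$ (as $c\neq 0$) and within distance $\epsilon$ of $A$. Hence every neighborhood of $A$ meets $\mathcal{N}\setminus\set{A}$, so $A$ is not isolated, and since $A$ was arbitrary, $\mathcal{N}$ has no isolated points.

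Neither step presents a real obstacle; the only point that warrants a moment's care is confirming in the second part that $A+cI$ is a genuinely new matrix arbitrarily close to $A$. This reduces to the elementary observation that $c\mapsto A+cI$ is a nonconstant continuous map (an isometric embedding of a line, up to the scalar relating $\|cI\|$ to $|c|$), so distinct $c$ yield distinct matrices while small $c$ yields nearby ones. The structural content worth emphasizing is that failing the WSP is invariant under translation in the $cI$ direction, so $\mathcal{N}$ is a union of such lines and therefore can contain no isolated point.
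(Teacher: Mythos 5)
Your proof is correct and follows exactly the reasoning the paper intends: the paper states this Observation without proof, and the two facts you use --- that $[X,A+cI]=[X,A]$ so the defining conditions of the WSP are unchanged by adding $cI$, and that the line $c\mapsto A+cI$ supplies non-WSP matrices arbitrarily close to but distinct from $A$ --- are precisely the implicit argument. Nothing further is needed.
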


We conclude this section by proving that the direct sum of two matrices with the WSP which do not share an eigenvalue has the WSP.

\begin{lemma}
Let $A$ and $B$ be two matrices with the WSP which do not have a common eigenvalue. Then $A\oplus B$ also has the WSP.
\end{lemma}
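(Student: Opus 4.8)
The plan is to write $A\oplus B$ in block form, decompose an arbitrary competitor matrix $X$ into blocks compatible with the sizes of $A$ and $B$, and then read off what the two defining conditions of the WSP force on each block. The decisive fact will be a classical result on Sylvester equations, so the heart of the argument is really about the off-diagonal coupling blocks.

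Concretely, suppose $X$ is symmetric with $X\circ I=O$ and $[X,\,A\oplus B]=O$; I want to conclude $X=O$. Partition
\[
X=\begin{bmatrix} X_{11} & X_{12}\\ X_{12}^{\mathsf T} & X_{22}\end{bmatrix},
\]
where $X_{11}$ has the size of $A$ and $X_{22}$ the size of $B$, using symmetry of $X$ for the $(2,1)$ block. Expanding the commutator $[X,\,A\oplus B]=O$ block by block yields four relations: the diagonal blocks give $[X_{11},A]=O$ and $[X_{22},B]=O$, while the off-diagonal blocks give the Sylvester relation $AX_{12}=X_{12}B$ (and its transpose for the lower block). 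Meanwhile $X\circ I=O$ says precisely that $X_{11}$ and $X_{22}$ have zero diagonal, since the main diagonal of $X$ meets only the diagonal blocks.

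Next I would dispatch the diagonal blocks using the hypotheses directly. The matrix $X_{11}$ is symmetric, has zero diagonal, and commutes with $A$; since $A$ has the WSP, Definition~\ref{D:PropertyP} forces $X_{11}=O$, and identically $X_{22}=O$ from the WSP of $B$. This leaves only the coupling block $X_{12}$, governed by $AX_{12}=X_{12}B$. Here is where the no-common-eigenvalue hypothesis enters: the Sylvester operator $S\colon Y\mapsto AY-YB$ has eigenvalues exactly of the form $\lambda-\mu$ with $\lambda$ an eigenvalue of $A$ and $\mu$ an eigenvalue of $B$, so $S$ is injective precisely when $A$ and $B$ share no eigenvalue. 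In finite dimensions injectivity gives $\ker S=\{O\}$, hence $X_{12}=O$ and, by symmetry, the lower block vanishes too. Therefore $X=O$, and $A\oplus B$ has the WSP.

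I expect the only genuine obstacle to be the invocation of the Sylvester result $AX_{12}=X_{12}B\Rightarrow X_{12}=O$; everything else is bookkeeping. Depending on what the paper wishes to assume, this step can either cite the standard spectral description of the Sylvester operator or be proven in a line: if $A\mathbf v=\lambda\mathbf v$ for an eigenvector $\mathbf v$, then $A(X_{12}\mathbf w)=X_{12}(B\mathbf w)$ propagates the spectral constraint, and comparing eigenspaces of $A$ and $B$ shows that a nonzero $X_{12}$ would force a shared eigenvalue. I would keep the write-up self-contained by recording the Sylvester fact explicitly, since it is the one nontrivial input, and present the block computation compactly so the proof reduces to three short observations: zero diagonal localizes to the diagonal blocks, the WSP kills those blocks, and disjoint spectra kill the coupling block.
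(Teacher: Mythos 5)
Your proof is correct and follows essentially the same route as the paper: block-decompose $X$, use the WSP of $A$ and $B$ to kill the diagonal blocks, and use the disjointness of the spectra to kill the off-diagonal intertwining block. The only cosmetic difference is that the paper cites a lemma from an earlier reference for the Sylvester/intertwining step where you invoke the spectral description of the Sylvester operator directly, and you exploit the symmetry of $X$ to handle the two off-diagonal blocks at once.
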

\begin{proof}
Let \[ X = \left[ \begin{array}{c|c}
X_1 & X_2 \\\hline X_3 & X_4
\end{array} \right], \] where $X_1$ and $X_4$ have the same size as $A$ and $B$, respectively, and assume that $X\circ I = O$, that is, $X_1 \circ I = O$ and $X_4 \circ I = O$. Furthermore, assume that $[ X, A \oplus B] = O$. We want to show that $X = O$. Note that 
\[ [X, A \oplus B] = \left[ \begin{array}{c|c}
[X_1,A] & X_2 B - A X_2 \\\hline X_3 A - B X_3 & [X_4, B]
\end{array} \right] = O. \]

Hence $[X_1,A] = O$ and $[X_4, B] = O$. Since $A$ and $B$ have the WSP, we conclude that $X_1 = O$ and $X_4 = O$. Furthermore, $X_2 B - A X_2 = O$ and $X_3 A - B X_3 = O$. That is, $X_2$ and $X_3$ are the intertwining matrices of $A$ and $B$. By Lemma 1.1 of \cite{lambdamu13} $X_2 = O$ and $X_3 = O$, since $A$ and $B$ do not have a common eigenvalue.
\end{proof}

We will use the following corollary of the above lemma in the next section.

\begin{corollary}\label{L:Property P with direct sum}
If a matrix $A$ has the WSP, then so does $A\oplus [c]$ where $c$ is any real number that is not an eigenvalue of $A$.
\end{corollary}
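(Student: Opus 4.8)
The plan is to derive this corollary directly from the preceding lemma, so the main task is simply to verify that the hypotheses of that lemma are met when we take $B = [c]$. First I would observe that $A$ has the WSP by assumption, and $B = [c]$ is a $1 \times 1$ matrix, which (as noted in the remarks following Definition~\ref{D:PropertyP}) automatically has the WSP. Thus both summands have the WSP, and the first hypothesis of the lemma is satisfied with no further work.

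The second thing to check is that $A$ and $B = [c]$ share no common eigenvalue. The only eigenvalue of the $1\times 1$ matrix $[c]$ is $c$ itself, so the condition that $A$ and $[c]$ have no common eigenvalue is precisely the hypothesis that $c$ is not an eigenvalue of $A$. This is exactly what the corollary assumes. Hence both conditions required by the lemma hold, and I would conclude directly that $A \oplus [c]$ has the WSP.

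Since the corollary is an immediate specialization of the lemma, I do not anticipate any genuine obstacle. The only point requiring a moment's care is making explicit that the spectrum of a $1\times 1$ matrix $[c]$ consists of the single number $c$, so that ``no common eigenvalue'' translates correctly into the hypothesis ``$c$ is not an eigenvalue of $A$.'' Everything else is a direct application of the previous result, so the proof should be a single short paragraph invoking the lemma with $B = [c]$.
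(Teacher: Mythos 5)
Your proof is correct and matches the paper's intent exactly: the statement is presented as an immediate corollary of the preceding direct-sum lemma, using the facts that any $1\times 1$ matrix has the WSP and that $c$ not being an eigenvalue of $A$ is precisely the no-common-eigenvalue hypothesis. No issues.
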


\section{Finite $\lambda$-SIEP solved with induction}\label{SectionFiniteLambdaSISP}
In this section, to control the norm of matrices constructed, we provide an inductive proof for Theorem \ref{finitelambdasiep} below, which is also proved using the Jacobian method in~\cite{lambdamu13}.

Let $G$ be a given graph on $n$ vertices $1, 2, \dots, n$ and with $m$ edges $\{ i_{\ell}, j_{\ell} \}$ where $\ell = 1,2,\ldots,m$, and $i_\ell < j_\ell$. Also, let $\lambda_1 < \lambda_2 < \ldots < \lambda_n$ be $n$ distinct real numbers, and let $\bm x = (x_1, x_2, \ldots, x_n)$ and $\bm y = (y_1, y_2, \ldots, y_m)$, where $x_i$'s and $y_i$'s are independent real unknowns. Define $M = M(\bm x, \bm y)$ to be the $n\times n$ symmetric matrix whose $i$-th diagonal entry is $x_i$,  its $(i_{\ell}, j_{\ell})$-th and $(j_{\ell}, i_{\ell})$-th entries are $y_\ell$, and it is zero elsewhere. Define \begin{align*}
f\colon \mathbb{R}^{n} \times  \mathbb{R}^{m} &\to \mathbb{R}^{n}\\
(\bm x, \bm y) &\mapsto \big(\lambda_1(M),\lambda_2(M),\ldots,\lambda_n(M)\big),
\end{align*}
where $\lambda_i(M)$ is the $i$-th smallest eigenvalue of $M$. We want to show that there is a real symmetric matrix $A$ whose graph is $G$ and its eigenvalues are $\lambda_1,\lambda_2,\ldots,\lambda_n$. In other words, we want to find $(\bm a, \bm b) \in \mathbb{R}^{n} \times  \mathbb{R}^{m}$ such that no entry of $\bm b$ is zero and $f(\bm a, \bm b) = (\lambda_1,\lambda_2,\ldots,\lambda_n)$. In order to do so we introduce a new function 
\begin{align*}
g\colon\mathbb{R}^{n} \times  \mathbb{R}^{m} &\to \mathbb{R}^{n}\\
(\bm a, \bm b) &\mapsto \left(\tr M \; , \; \frac{1}{2} \tr M^2 \; , \; \ldots \; , \; \frac{1}{n}\tr M^n \right).
\end{align*}

Note that by Newton's identities \cite{macdonald95} there exists an invertible function $h:\mathbb{R}^n \to \mathbb{R}^n$ such that $h\circ g(\bm a, \bm b) = f(\bm a, \bm b)$ for all $(\bm a, \bm b) \in \mathbb{R}^n \times \mathbb{R}^m $. Thus it suffices to show that there is $(\bm a, \bm b) \in \mathbb{R}^{n} \times  \mathbb{R}^{m}$ such that none of the entries of $\bm b$ is zero, and \begin{equation}\label{E:Newton function}
g(\bm a, \bm b) = \Bigg( \sum_{i=1}^n \lambda_i \;, \; \frac{1}{2}\sum_{i=1}^n \lambda_i^2 \;, \;\ldots \;,\; \frac{1}{n}\sum_{i=1}^n \lambda_i^n\Bigg).\end{equation}

Let $\jac_x(g)$ denote the matrix obtained from the Jacobian matrix of $g$ restricted to the columns corresponding to the derivatives with respect to $x_i$'s. Let $A = M(\bm a, \bm b)$ for some $(\bm a, \bm b) \in \mathbb{R}^n \times \mathbb{R}^m$. The matrix $\jac_x(g)\at_A$ is the evaluation of $\jac_x(g)$ at $(\bm a, \bm b)$. Then by Lemma 3.1 of \cite{lambdamu13} it is easy to see that
	\[
	\jac_x(g)\at_{A}
	 = 
	\left[ 
		\begin{array}{cccc}
		I_{11} & \cdots & I_{n-1,n-1} & I_{nn}\\
		A_{11} & \cdots & A_{n-1,n-1} & A_{nn}\\
		\vdots & \ddots & \vdots & \vdots\\
		A^{n-1}_{11} & \cdots & A^{n-1}_{n-1,n-1} & A^{n-1}_{nn}\\
		A^{n}_{11} & \cdots & A^{n}_{n-1,n-1} & A^{n}_{nn}
		\end{array} 
	\right],
	\]
where $A^k_{ij}$ denotes the $(i,j)$-th entry of $A^k$.

\begin{theorem} \label{finitelambdasiep}
Given a graph $G$ on $n$ vertices and a set of $n$ distinct real numbers $\Lambda=\{ \lambda_1, \lambda_2, \dots, \lambda_n \}$, there is an $n\times n$ symmetric matrix whose graph is $G$ and its spectrum is $\Lambda$.
\end{theorem}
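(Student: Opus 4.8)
The plan is to prove the theorem by induction on the number $n$ of vertices, strengthening the statement so that the matrix produced at each stage also has the WSP. This extra conclusion is what lets the induction feed into itself (and, later, is what will make the norms controllable in the passage to the infinite graph). The base case $n=1$ is immediate: the matrix $[\lambda_1]$ realizes the one-vertex graph with spectrum $\{\lambda_1\}$, and every $1\times1$ matrix has the WSP.

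For the inductive step, assume the strengthened statement for $n-1$. Given $G$ on $n$ vertices and distinct reals $\lambda_1<\cdots<\lambda_n$, delete vertex $n$ to form $G'=G[\{1,\dots,n-1\}]$ and apply the inductive hypothesis with the spectrum $\{\lambda_1,\dots,\lambda_{n-1}\}$ to obtain an $(n-1)\times(n-1)$ symmetric matrix $A'$ with graph $G'$, spectrum $\{\lambda_1,\dots,\lambda_{n-1}\}$, and the WSP. Since $\lambda_n$ is not an eigenvalue of $A'$, Corollary \ref{L:Property P with direct sum} shows that $A_0:=A'\oplus[\lambda_n]$ has the WSP. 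Writing $A_0=M(\bm a_0,\bm b_0)$, the vector $\bm b_0$ carries the nonzero edge values of $A'$ on the edges of $G'$ and assigns $0$ to every edge of $G$ incident to $n$; the spectrum of $A_0$ is exactly $\Lambda$ and its graph is $G$ with all edges at $n$ erased.

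The heart of the argument is to switch these missing edges on without moving the spectrum, and the Implicit Function Theorem applied to $g$ does this as soon as $\jac_x(g)\at_{A_0}$ is invertible. This is exactly where the WSP is used: a linear dependence among the rows of $\jac_x(g)\at_{A_0}$ is the same thing as a nonzero polynomial $p$ with $\deg p<n$ and $\diag\big(p(A_0)\big)=O$. Because $A_0$ has the $n$ distinct eigenvalues of $\Lambda$, no nonzero polynomial of degree less than $n$ can annihilate $A_0$, so $X:=p(A_0)$ would be a nonzero symmetric matrix with zero diagonal commuting with $A_0$, contradicting the WSP; hence $\jac_x(g)\at_{A_0}$ is invertible. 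The Implicit Function Theorem then yields a smooth map $\bm y\mapsto\bm x(\bm y)$ near $\bm b_0$ along which $g(\bm x(\bm y),\bm y)$ stays equal to the power-sum vector \eqref{E:Newton function}, so that $M(\bm x(\bm y),\bm y)$ has spectrum $\Lambda$ throughout. Keeping the $G'$-edge coordinates fixed at their nonzero values in $\bm b_0$ and setting each edge at $n$ to a common small nonzero value $\varepsilon$ gives, for small $\varepsilon$, a point at which every coordinate of $\bm y$ is nonzero; the resulting matrix $A:=M(\bm x(\bm y),\bm y)$ then has off-diagonal support exactly the edge set of $G$, spectrum $\Lambda$, and—by Lemma \ref{L:Property P is open}, for $\varepsilon$ small—the WSP. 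This closes the induction.

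The step I expect to be the crux is the invertibility of $\jac_x(g)\at_{A_0}$, that is, the passage from the WSP to a nonvanishing Jacobian. It rests on the fact that a matrix with distinct eigenvalues commutes only with polynomials in itself, which is what upgrades a zero-diagonal element of the commutant into a zero-diagonal polynomial in $A_0$, and hence into a row dependence of the explicit Jacobian displayed above. Everything afterward—freezing the spectrum by the Implicit Function Theorem, landing on a point with all edge entries nonzero, and retaining the WSP by openness—is comparatively routine. One could in fact reach the conclusion in a single step, starting from $\diag(\lambda_1,\dots,\lambda_n)$, whose Jacobian is an ordinary Vandermonde matrix; the reason for arguing inductively, adjoining one vertex at a time, is that this is what will keep the norms of the successive matrices under control when the construction is pushed to the infinite graph.
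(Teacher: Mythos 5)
Your proposal is correct and follows essentially the same route as the paper: induction on $n$ with the strengthened WSP conclusion, the direct sum $A'\oplus[\lambda_n]$, invertibility of $\jac_x(g)$ via identifying a row dependence with a zero-diagonal polynomial in the matrix (which the WSP and the degree-$n$ minimal polynomial rule out), the Implicit Function Theorem to switch on the edges at vertex $n$ while fixing the power sums, and openness of the WSP to close the induction. The only cosmetic difference is that you invoke Corollary~\ref{L:Property P with direct sum} up front and argue from the WSP of $A'\oplus[\lambda_n]$, whereas the paper unfolds the same block computation inline before citing that corollary only at the end.
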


\begin{proof}
By induction on the number of vertices we prove the stronger result that there exist $\bm a$ and $\bm b$ satisfying~\eqref{E:Newton function} such that the graph of $A = M(\bm a, \bm b)$ is $G$ and $A$ has the WSP.

When $n=1$, the matrix $A=[\lambda_1]$ has the WSP and establishes the base of the induction for the graph $G$ with one vertex. Now assume that our claim holds for $n-1$, that is, given a graph $G_{n-1}$ on $n-1$ vertices and $n-1$ distinct real numbers $\lambda_1, \lambda_2, \dots, \lambda_{n-1}$, there exists a symmetric matrix $A_{n-1}$ whose graph is $G_{n-1}$, its spectrum is $\{\lambda_1, \lambda_2, \dots, \lambda_{n-1}\}$, and $A_{n-1}$ has the WSP. To prove the claim for $n$, assume that the vertices of $G$  are labeled by $1, 2, \dots, n$ and let $G_{n-1}$ be the graph obtained from $G$ by removing its $n$-th vertex. The induction hypothesis applied to $G_{n-1}$ and $\lambda_1, \lambda_2, \dots, \lambda_{n-1}$ yields a matrix $A_{n-1}$ with the desired properties. Now, let 
	\begin{equation}\label{E:Definition of A}
		A = A_{n-1} \oplus [\lambda_n]
		=
		\left[
	\begin{array}{ccc|c}
	&&&\\
	&A_{n-1}& &\mathbf{0}\\
	&&&\\
	\hline
	&\mathbf{0}&& \lambda_n
	\end{array}
	\right].
	\end{equation} 

We will use the Implicit Function Theorem to make the desired entries in the last row and last column of $A$ nonzero, without changing the eigenvalues of the matrix. In order to do so, we show that $J = \jac_x(g)\at_A$ is nonsingular by proving that the rows of $J$ are linearly independent. Let $\bm \alpha = (\alpha_0, \alpha_1, \ldots, \alpha_{n-1})$. For $p(x)=\alpha_0+\alpha_1x+\dots+\alpha_{n-1}x^{n-1}$ and $X := p(A)$, it is clear that $[X, A]= O$. Assume that $\bm \alpha J = \bm 0$ and observe that this holds if and only if $X \circ I = O$.
To prove that $\bm \alpha=\bm 0$ let
	\[
	X=\renewcommand{\arraystretch}{1.25}
	\left[
	\begin{array}{ccc|c}
	&&&\\
	&X_{n-1}& &\mathbf{y}\\
	&&&\\\hline
	&\mathbf{y}^T&& 0
	\end{array} 
	\right].
	\]
Then
	\[
	[X,A]
	=
	\renewcommand{\arraystretch}{1.25}
	\left[
	\begin{array}{c|c}
	  \\[-1ex] \relax
	  [X_{n-1}, A_{n-1}] & \rotatebox[origin=c]{90}{$\lambda_n\mathbf{y}- A_{n-1}\mathbf{y}$}\\
	  \\[-1ex]
	  \hline
	  \lambda_n\mathbf{y}^T-\mathbf{y}^TA_{n-1}& 0 \\
	\end{array}
	\right]
	= O
	\]
implies $\lambda_n\mathbf{y}- A_{n-1}\mathbf{y} = \bm 0$. Since $\lambda_n$ is not an eigenvalue of $A_{n-1}$, we have $\mathbf{y}=\mathbf{0}$. On the other hand, $A_{n-1}$ has the WSP, and hence $[X_{n-1}, A_{n-1}]= O$ and $X_{n-1}\circ I= O$ imply that $X_{n-1}= O$. Therefore $X=p(A)= O$.

Finally, using the fact that $A$ has $n$ distinct eigenvalue and that its minimal polynomial has degree $n$ we conclude that $p(x)\equiv 0$, and hence $\bm \alpha=\mathbf{0}$ as we wanted.

Next, assume that the labeling of the vertices of $G$ is done in such a way that the $n$-th vertex is adjacent to vertices $1, 2, \dots, d$. Then by the Implicit Function Theorem, for sufficiently small $\varepsilon_1, \varepsilon_2, \dots, \varepsilon_d$, there exist numbers $\widetilde{A}_{11}, \widetilde{A}_{22}, \dots, \widetilde{A}_{nn}$ close to $A_{11}, A_{22}, \dots, A_{nn}$ such that if
	\[ 
	\widetilde{A} = 
	\left[ \def\arraystretch{1.5}\begin{array}{c|c}
	\arraycolsep=6pt\def\arraystretch{2}
	\begin{array}{cccc}
		\widetilde{A}_{11}& A_{12} & \cdots & A_{1,n-1}\\
		A_{21} & \widetilde{A}_{22} & \cdots & A_{2,n-1}\\
		\vdots & \vdots & \ddots & \vdots\\
		A_{n-1,1}& A_{n-1,2} & \cdots & \widetilde{A}_{n-1,n-1}	
	\end{array} & \def\arraystretch{1.12}
	\begin{array}{c}
		\varepsilon_1 \\ \vdots \\ \varepsilon_d \\ 0 \\ \vdots \\ 0
	\end{array}\\\hline
	\arraycolsep=8pt
	\begin{array}{cccccc}
		\varepsilon_1 & \cdots & \varepsilon_d & 0 & \cdots & 0 \end{array} &  \widetilde{A}_{nn}
	\end{array} \right ],
	\] 
	then \[g(\bm x, \bm y)\at_{\widetilde{A}} = \Bigg( \sum_{i=1}^n \lambda_i \;, \; \frac{1}{2}\sum_{i=1}^n \lambda_i^2 \;, \;\ldots \;,\; \frac{1}{n}\sum_{i=1}^n \lambda_i^n\Bigg).\] Furthermore, if $\varepsilon_i$'s are chosen to be nonzero, then $G$ is the graph of $\widetilde{A}$. Thus $f(\bm x, \bm y)\at_{\widetilde{A}} = (\lambda_1,\lambda_2,\ldots,\lambda_n)$, that is, the spectrum of $\widetilde{A}$ equals $\Lambda$. It is evident that this solution is not unique. The operator norm of the matrix $A$, denoted $\|A\|_{\textrm{op}}$, is defined by 
	\[ 
	\| A \|_{\textrm{op}} 
	= 
	\sup_{\| \bm v \|_2 = 1} \| A \bm v \|_2. 
	\]
Choosing $\varepsilon_i$'s small enough, we can have 
	\begin{equation}\label{E:Closeness condition}
	\|\widetilde{A}-A\|_{\text{op}}<\varepsilon
	\end{equation}
for any given $\varepsilon$. It only remains to prove that $\widetilde{A}$ has the WSP.  This is an immediate application of Corollary~\ref{L:Property P with direct sum} and Lemma \ref{L:Property P is open} and follows from Equations~\eqref{E:Definition of A} and \eqref{E:Closeness condition}.
\end{proof}

\begin{remark}
Note that the proof of Theorem \ref{finitelambdasiep} enables us to control the norm of $\widetilde{A}$ in each step. This will be important in examination of the infinite case of this SISP in the next section.
\end{remark}

\section{Infinite $\lambda$-SISP}\label{SecctionInfiniteLambdaSISP}
In this section we will prove an analogue of Theorem \ref{finitelambdasiep} for countably infinite (but not necessarily locally finite) graphs. This is done by taking the limit, in a suitable sense, of the matrices $\widetilde{A}$ that were constructed in the proof of Theorem \ref{finitelambdasiep}. Then we will show that this limit has the desired properties.

We shall need a corollary of the following result about a ``continuity property'' of the spectrum in the proof of our main theorem (See Chapter V, Theorem 4.10 of \cite{kato95}).
\begin{theorem}\cite[p. 291]{kato95}
Let $S$ and $T$ be bounded self-adjoint operators on a Hilbert space with the spectra $\sigma(S)$ and $\sigma(T)$, respectively. Then the Hausdorff distance $d_H(\sigma(S), \sigma(T))$ satisfies 
	\[
	d_H(\sigma(S), \sigma(T))\le \|S-T\|_{\textrm{op}}.
	\]
\end{theorem}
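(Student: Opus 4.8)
The plan is to prove the equivalent two-sided estimate on the Hausdorff distance by reducing it to a single resolvent bound. Since the hypotheses and the quantity $\|S-T\|_{\textrm{op}}$ are symmetric in $S$ and $T$, and both spectra lie in $\mathbb{R}$, it suffices to show that for every $\lambda\in\sigma(S)$ one has $\operatorname{dist}(\lambda,\sigma(T))\le\|S-T\|_{\textrm{op}}$; the symmetric statement bounds the other one-sided excess, and together they give the Hausdorff bound. I would prove the contrapositive: if $\lambda\in\mathbb{R}$ satisfies $\operatorname{dist}(\lambda,\sigma(T))>\|S-T\|_{\textrm{op}}$, then $\lambda\notin\sigma(S)$.

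The crucial input is the resolvent-norm identity for a bounded self-adjoint operator: whenever $\lambda\notin\sigma(T)$, the resolvent $(T-\lambda I)^{-1}$ exists and satisfies $\|(T-\lambda I)^{-1}\|_{\textrm{op}}=1/\operatorname{dist}(\lambda,\sigma(T))$. This is where self-adjointness is \emph{essential}, and it is the heart of the argument. It follows from the spectral theorem: under the functional calculus $(T-\lambda I)^{-1}$ corresponds to the function $t\mapsto (t-\lambda)^{-1}$ on the compact set $\sigma(T)\subset\mathbb{R}$, whose supremum norm is exactly $1/\operatorname{dist}(\lambda,\sigma(T))$. Note that the assumption $\operatorname{dist}(\lambda,\sigma(T))>\|S-T\|_{\textrm{op}}\ge 0$ already guarantees $\lambda\notin\sigma(T)$, so the resolvent is available.

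Granting this, the remaining step is a standard Neumann-series perturbation. Writing
\[
S-\lambda I=(T-\lambda I)\bigl[\,I+(T-\lambda I)^{-1}(S-T)\,\bigr],
\]
the correction term has operator norm at most $\|(T-\lambda I)^{-1}\|_{\textrm{op}}\,\|S-T\|_{\textrm{op}}=\|S-T\|_{\textrm{op}}/\operatorname{dist}(\lambda,\sigma(T))<1$. Hence $I+(T-\lambda I)^{-1}(S-T)$ is invertible via its Neumann series, so $S-\lambda I$ is a product of invertible operators and is itself invertible, giving $\lambda\notin\sigma(S)$. This establishes $\sup_{\lambda\in\sigma(S)}\operatorname{dist}(\lambda,\sigma(T))\le\|S-T\|_{\textrm{op}}$, and interchanging the roles of $S$ and $T$ bounds the other excess, yielding $d_H(\sigma(S),\sigma(T))\le\|S-T\|_{\textrm{op}}$.

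I expect the only real obstacle to be the resolvent-norm identity. For a general bounded operator one has merely the inequality $\|(T-\lambda I)^{-1}\|_{\textrm{op}}\ge 1/\operatorname{dist}(\lambda,\sigma(T))$, and the theorem genuinely fails in the non-normal case, where spectra can move discontinuously under small perturbations. The self-adjoint hypothesis upgrades this to an equality through the spectral calculus, and that equality is precisely what makes the Neumann-series estimate sharp enough to force $\lambda\notin\sigma(S)$.
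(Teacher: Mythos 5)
Your proof is correct. Note that the paper itself offers no proof of this statement---it is quoted directly from Kato (Chapter V, Theorem 4.10) as a known result---so there is nothing internal to compare against. The argument you give is the standard one: the self-adjoint resolvent bound $\|(T-\lambda I)^{-1}\|_{\textrm{op}}=1/\operatorname{dist}(\lambda,\sigma(T))$ obtained from the functional calculus, followed by the Neumann-series factorization $S-\lambda I=(T-\lambda I)\bigl[I+(T-\lambda I)^{-1}(S-T)\bigr]$, and symmetry in $S$ and $T$ to control both one-sided excesses in the Hausdorff distance. You also correctly identify where self-adjointness (more generally, normality) is indispensable, since for non-normal operators only the inequality $\|(T-\lambda I)^{-1}\|_{\textrm{op}}\ge 1/\operatorname{dist}(\lambda,\sigma(T))$ holds and spectral continuity of this quantitative form fails.
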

This theorem immediately implies the following corollary.
\begin{corollary}\label{C:Closeness of spectra}
Let $\{T_n\}_{n=1}^\infty$ be a sequence of bounded self-adjoint operators on a Hilbert space $\mathcal{H}$, and assume that $T_n\to T$ in the operator norm. Then for any $\lambda\in \sigma(T)$, every neighborhood of $\lambda$ intersects $\sigma(T_n)$ nontrivially provided that $n$ is sufficiently large.
\end{corollary}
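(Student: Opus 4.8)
The plan is to deduce this directly from the preceding theorem of Kato, treating it as essentially a bookkeeping exercise in unwinding the definition of Hausdorff distance. First I would fix $\lambda \in \sigma(T)$ together with an arbitrary neighborhood $U$ of $\lambda$, and choose $\varepsilon > 0$ small enough that the open ball $\{\mu : |\mu - \lambda| < \varepsilon\}$ is contained in $U$; it suffices to produce a point of $\sigma(T_n)$ inside this ball for all large $n$. Since $T_n \to T$ in the operator norm, there is an index $N$ such that $\|T_n - T\|_{\textrm{op}} < \varepsilon$ for all $n \ge N$. Applying Kato's theorem to the pair $S = T_n$ and $T$ then gives
	\[
	d_H(\sigma(T), \sigma(T_n)) \le \|T_n - T\|_{\textrm{op}} < \varepsilon
	\qquad (n \ge N).
	\]

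Next I would unwind what this Hausdorff bound says about $\lambda$ specifically. Recalling that $d_H(\sigma(T), \sigma(T_n))$ dominates the one-sided quantity $\sup_{s \in \sigma(T)} \inf_{t \in \sigma(T_n)} |s - t|$, the displayed inequality forces $\inf_{t \in \sigma(T_n)} |\lambda - t| < \varepsilon$ for every $n \ge N$. Hence for each such $n$ there exists a point $t_n \in \sigma(T_n)$ with $|\lambda - t_n| < \varepsilon$, so $t_n$ lies in the chosen ball and therefore in $U$. This shows $\sigma(T_n) \cap U \neq \varnothing$ for all sufficiently large $n$, which is exactly the assertion.

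There is essentially no genuine obstacle here, since the heavy lifting is done by Kato's theorem; the only point requiring a moment of care is the direction of the Hausdorff estimate. One must use precisely the half of the $\max$ defining $d_H$ that controls how far points of $\sigma(T)$ can be from $\sigma(T_n)$ (and not the reverse half), so that the spectral point $\lambda$ of the \emph{limit} operator is approximated by spectral points of the $T_n$. Making this direction explicit is the whole content of the argument.
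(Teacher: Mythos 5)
Your argument is correct and is precisely the unwinding of Kato's theorem that the paper has in mind when it says the corollary is ``immediately implied''; the paper simply omits the details you supply. Your explicit attention to which half of the Hausdorff $\max$ is needed (the half bounding $\sup_{s\in\sigma(T)}\inf_{t\in\sigma(T_n)}|s-t|$) is exactly the right point of care, and nothing is missing.
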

Now we are ready to state and prove our main theorem.

\begin{theorem}[SISP with data $(G, \Lambda)$]\label{T:infinite bounded lambdasisp}
	Given an infinite graph $G$ on countably many vertices and a compact, infinite set of real numbers $\Lambda$, there exists a self-adjoint operator $T$ on the Hilbert space $\ell^2(\mathbb{N})$ such that 
	\begin{enumerate}[(i)]
	\item the (approximate point) spectrum of $T$ equals $\Lambda$, and
	\item the (real symmetric) standard matrix of $T$ has graph $G$.
	\end{enumerate}
\end{theorem}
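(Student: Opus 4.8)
The plan is to construct the operator $T$ as a limit of the finite matrices produced by Theorem~\ref{finitelambdasiep}, controlling both the graph-approximation and the spectrum-approximation simultaneously. Since $\Lambda$ is compact and infinite, it has a countable set of isolated points together with a nonempty set of limit points. The first move is to choose, for each $n$, a finite set $\Lambda_n \subset \Lambda$ of $n$ distinct reals that ``samples'' $\Lambda$ with increasing fidelity, so that the sequence of finite spectra $\Lambda_n$ converges to $\Lambda$ in Hausdorff distance. A natural way to do this is to enumerate the isolated points and interleave them with points chosen to become dense in the set of limit points, ensuring every point of $\Lambda$ is eventually approximated and no spurious limit points are introduced.

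**Next I would** apply the inductive construction in the proof of Theorem~\ref{finitelambdasiep}, but coordinated across the exhaustion $G_1 \subset G_2 \subset \cdots$ of $G$ by its induced subgraphs $G_n = G[\{1,\dots,n\}]$. At stage $n$ the theorem yields a symmetric matrix $A_n$ with graph $G_n$, spectrum $\Lambda_n$, and the WSP. The crucial point is that the proof lets us choose the perturbation parameters $\varepsilon_i$ as small as we like; using the norm-control recorded in Equation~\eqref{E:Closeness condition}, I would select them so that $\|\widetilde{A}_n - \widetilde{A}_{n-1}\oplus[\lambda_n]\|_{\mathrm{op}} < 2^{-n}$, where $\widetilde{A}_{n-1}$ is (the finite matrix already built, padded with) the stage-$(n-1)$ data. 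Viewing each $A_n$ as an operator on $\ell^2(\mathbb{N})$ by extending with zeros, this summable-increment condition makes $\{A_n\}$ a Cauchy sequence in operator norm, so it converges to a bounded self-adjoint operator $T$. Because the entries stabilize, the standard matrix of $T$ agrees with $G$ on every finite induced subgraph, which by the paper's definition means $T$ has graph $G$, giving part~(ii).

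**The spectral identity in part~(i) is where the real work lies.** One inclusion is comparatively soft: by Corollary~\ref{C:Closeness of spectra}, since $A_n \to T$ in operator norm and $\sigma(A_n) = \Lambda_n \subset \Lambda$ with $\Lambda$ closed, every point of $\sigma(T)$ is a limit of points of $\Lambda$ and hence lies in $\Lambda$, so $\sigma(T) \subseteq \Lambda$. The reverse inclusion $\Lambda \subseteq \sigma(T)$ must be argued separately for isolated points and for limit points. For an isolated point $\mu \in \Lambda$, once $\mu \in \Lambda_n$ it remains an exact eigenvalue of $A_n$; I would use the spectral-continuity estimate to produce approximate eigenvectors for $T$ near $\mu$ and invoke the isolation to upgrade $\mu$ to a genuine (multiplicity-one) eigenvalue of $T$. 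For a limit point $\mu$, I would exploit that $\mu$ is approximated by points of $\Lambda_n$ to build a sequence of unit vectors with $\|(T-\mu)v_k\| \to 0$, placing $\mu$ in the approximate point spectrum.

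**The main obstacle I anticipate** is ensuring the construction does not manufacture spectrum outside $\Lambda$ in the limit, and, dually, that no point of $\Lambda$ is lost. The soft inclusion handles the first concern at the level of $\sigma(T) \subseteq \Lambda$, but the delicate part is controlling the \emph{essential} spectrum and the multiplicities so that the limit points of $\Lambda$ become exactly the essential spectrum while isolated points stay simple eigenvalues, as promised in the abstract. This requires the sampling scheme $\Lambda_n \to \Lambda$ to be chosen with care—packing approximating eigenvalues into every neighborhood of each limit point so that approximate eigenvectors accumulate there (forcing essential spectrum), while keeping isolated points rigidly in place with a single eigenvalue each. Balancing these two demands against the uniform norm-control needed for convergence is the technical heart of the argument.
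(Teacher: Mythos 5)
There is a genuine gap at the point where you pass from the finite matrices to an operator on $\ell^2(\mathbb{N})$: extending each $\widetilde{A}_n$ by zeros does \emph{not} produce a Cauchy sequence in the operator norm. If $P_n$ denotes the zero-padded extension of $\widetilde{A}_n$, then on the first $n+1$ coordinates
\[
P_{n+1}-P_n=\bigl(\widetilde{A}_{n+1}-\widetilde{A}_n\oplus[\lambda_{n+1}]\bigr)+\bigl(O\oplus[\lambda_{n+1}]\bigr),
\]
so $\|P_{n+1}-P_n\|_{\textrm{op}}\ge |\lambda_{n+1}|-2^{-n}$, which does not tend to $0$ in general; for $\Lambda=[1,2]$, say, every increment has norm at least $1/2$. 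Hence the norm-limit $T$ does not exist by your argument, and the subsequent spectral reasoning---which needs norm convergence both to invoke Corollary~\ref{C:Closeness of spectra} and to transport non-invertibility of $T_n-\mu I$ to $T-\mu I$---collapses. Zero-padding also distorts the approximating spectra: the padded operator has $0$ as an eigenvalue of infinite multiplicity, so its spectrum is $\Lambda_n\cup\{0\}$ rather than $\Lambda_n$, and $0$ need not belong to $\Lambda$.

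The missing idea, which is the one non-routine step of the paper's proof, is to pad with the \emph{unused eigenvalues} rather than with zeros: fix a countable dense subset $\{\lambda_1,\lambda_2,\dots\}$ of $\Lambda$ and set $T_n=\widetilde{A}_n\oplus\operatorname{diag}(\lambda_{n+1},\lambda_{n+2},\dots)$. Then $T_{n+1}-T_n$ is supported on the first $n+1$ coordinates, where it equals $\widetilde{A}_{n+1}-\widetilde{A}_n\oplus[\lambda_{n+1}]$, so \eqref{E:Closeness condition} genuinely yields $\|T_{n+1}-T_n\|_{\textrm{op}}<2^{-n}$ and norm convergence to a bounded self-adjoint $T$; moreover $\sigma(T_n)=\overline{\{\lambda_1,\lambda_2,\dots\}}=\Lambda$ for \emph{every} $n$. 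With that in hand both inclusions become soft: $\sigma(T)\subseteq\Lambda$ follows from Corollary~\ref{C:Closeness of spectra} and the closedness of $\Lambda$, while $\Lambda\subseteq\sigma(T)$ follows because each $T_n-\lambda_i I$ is non-invertible, invertibility is open in $\mathcal{B}(\ell^2)$, and $\sigma(T)$ is closed with $\{\lambda_i\}$ dense in $\Lambda$. This also makes your carefully designed Hausdorff-convergent sampling $\Lambda_n\to\Lambda$ unnecessary (any enumeration of a dense subset works), and it removes the need to treat isolated points and limit points separately or to control essential spectrum and multiplicities, which belong to the corollaries rather than to this theorem. Your remaining steps are essentially fine; for part~(ii), note that it is the off-diagonal entries that stabilize (the diagonal entries merely converge), but the graph does not depend on the diagonal.
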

\begin{proof} Let $\{\lambda_1, \lambda_2, \dots\}$ denote a countable dense subset of $\Lambda$. Suppose the vertices of $G$ are labeled by $\mathbb{N}$ and let $\widetilde{A}_n$ be the matrix obtained from applying Theorem~\ref{finitelambdasiep} to the finite graph $G[\{1, 2, \dots, n\}]$, the induced subgraph of $G$ on the first $n$ vertices, and the finite set of distinct real numbers $\{\lambda_1, \lambda_2, \dots, \lambda_n\}$. For each $n$ define the bounded linear operator $T_n$ on the Hilbert space of square-summable sequences $\ell^2(\mathbb{N})$ (denoted $\ell^2$ hereafter for short) such that 
	\[
	M_n =
	\left[
	\begin{array}{c|c}
	\begin{array}{ccc}
	&&\\
	&\widetilde{A}_n&\\
	&&\
	\end{array} & O\\\hline
	O & \begin{array}{ccc}
	\lambda_{n+1} &&\\
	&\lambda_{n+2}&\\
	&&\ddots
	\end{array}
	\end{array}
	\right]
	\]
is the matrix representation of $T_n$ with respect to the standard Hilbert basis $\mathfrak{B}=\{\bm {e_1}, \bm {e_2}, \dots\}$ of $\ell^2$, where the entries of $M_n$ that are not shown are zero. By Equation~\eqref{E:Closeness condition}, we can find $\widetilde{A}_{n+1}$ such that  
	\begin{equation}\label{E:norm-difference}
	\|
		\widetilde{A}_{n}\oplus[\lambda_{n+1}]
		-\widetilde{A}_{n+1}
	\|_{\textrm{op}}
	<
	\frac{1}{2^n}
	\end{equation}
and consequently	 for any $i\in \mathbb{N}$
	\[
	\|M_n \bm{e_i}-M_{n+1}\bm {e_i}\|_2<\frac{1}{2^n}.
	\]
Thus, the sequence of partial sums $\{M_n\bm {e_i}\}_{n=1}^\infty$ given by
	\[
	M_{n} \bm {e_i} = M_1\bm {e_i} +\sum_{k=1}^{n-1} (M_{k+1}\bm {e_i}-M_k\bm {e_i})
	\]
is absolutely convergent in $\ell^2$. Let $M$ denote the matrix whose columns are obtained by this limiting process, that is, $M$ is the matrix that $M\bm{e_i}=\lim_{n\to \infty} M_n \bm{e_i}$ for each $i\in \mathbb{N}$. Note that for each $n = 1,2,\dots$ the graph of $\widetilde{A}_n$ is the induced subgraph of $G$ on the first $n$ vertices. Thus, by construction $G$ is the graph of $M$. Our next objective is showing that $M$ is indeed the standard matrix of a bounded linear operator $T\colon \ell^2\to \ell^2$. 
Observe that
\begin{align*}
\|T_n - T_{n+1}\|_{\textrm{op}} 
&=
\sup_{\|\bm v\|_2=1} 
\|T_n \bm v - T_{n+1} \bm v\|_2\\
&= \sup_{\|\bm v\|_2=1}
\left\| 
 \left[ \begin{array}{c}
\left[ 
	\begin{array}{c}
	\widetilde{A}_n
	\left[ 
	\begin{array}{c}
	v_1 \\ \vdots \\ v_n
	\end{array} 
\right] 
\\ 
\hline
\lambda_{n+1} v_{n+1}
\end{array} \right] - \widetilde{A}_{n+1} \left[ \begin{array}{c}
v_1 \\ \vdots \\ v_{n+1}
\end{array} \right]\\ \hline
\begin{array}{c}
0 \\ \vdots \\ 0
\end{array}
\end{array} 
\right] 
\right\|_2  
\\
&= 
\sup_{\|\bm v\|_2=1} 
	\left\|
	\left( \left[ \begin{array}{c|c}
\widetilde{A}_n & \\ \hline
& \lambda_{n+1}
\end{array} \right] - \widetilde{A}_{n+1}
	\right) 
\left[ 
\begin{array}{c}
v_1 \\ \vdots \\ v_{n+1}
\end{array} 
\right] 
\right\|_2
\\
&< \frac{1}{2^n},
\end{align*}
where in the last line we have used the submultiplicative property of the operator norm and \eqref{E:norm-difference}. This inequality immediately implies that the sequence of partial sums $\{T_n\}_{n=1}^\infty$ given by
	\[
	T_n=T_1 +\sum_{k=1}^{n-1} (T_{k+1}-T_k)
	\]
is absolutely convergent in the Banach space of bounded operators $\mathcal{B}(\ell^2)$ and hence convergent to a limit $T$ with respect to the operator norm. Since for each $i\in \mathbb{N}$ we have $T \bm e_i =  \lim_{n\to \infty} T_n \bm e_i$ and $T_n \bm e_i=M_n\bm e_i$, we conclude that $T\bm e_i=M\bm e_i$ and thus $M$ is the standard matrix of $T$. 

It remains to prove that $\sigma(T)=\Lambda$.  First, we claim that each $\lambda_i \in \{\lambda_1, \lambda_2, \dots, \}\subset\Lambda$ is in the spectrum of $T$, that is, $T-\lambda_i I$ is not invertible. To see this note that as $n\to \infty$
\[
\|(T-\lambda_i I)-(T_n-\lambda_i I)\|_{\textrm{op}}=\|T-T_n\|_{\textrm{op}}\to 0
\]
which shows the existence of noninvertible operators in any neighborhood of $T-\lambda_i I$. Since invertibility is an open property in any unital Banach algebra and in particular in $\mathcal{B}(\ell^2)$, we have $\{\lambda_1, \lambda_2, \dots\}\subset \sigma(T)$, and the claim is proved. This inclusion implies $\Lambda \subset \sigma(T)$, because $\{\lambda_1, \lambda_2, \dots\}$ is dense in $\Lambda$ and $\sigma(T)$ is closed in $\mathbb{R}$.

Next, since $T_n\to T$ in the operator norm and $\sigma(T_n)=\Lambda$ for all $n$, by Corollary~\ref{C:Closeness of spectra} we conclude that for any $\lambda\in \sigma(T)$, every neighborhood of $\lambda$ intersects $\Lambda$. Hence the reverse inclusion $\sigma(T)\subset \Lambda$ is also established. 

Finally, to complete the proof of the theorem note that the spectrum of any self-adjoint operator equals its approximate point spectrum. 
\end{proof}

The isolated points of the spectrum of any self-adjoint operator are its eigenvalues, for instance, by an application of Gelfand's continuous functional calculus~\cite{douglas98}.  Thus the isolated points of $\Lambda$ are contained in the point spectrum of any solution $T$--obtained as in the proof of Theorem~\ref{T:infinite bounded lambdasisp} or otherwise--for the SISP with data $(G,\Lambda)$. On the other hand, by focusing only on the solutions $T$ obtained as in the proof of Theorem~\ref{T:infinite bounded lambdasisp} we will prove that the set of limit points of the spectrum of $T$ equals the \emph{essential spectrum} of $T$. This will allow us to show that our solutions of every SISP with the same $\Lambda$ are \emph{approximately unitarily equivalent}. We will then use this equivalence relation to show that the multiplicity of isolated eigenvalues of the constructed solutions is exactly one. 

\begin{definition}
Let $T$ be an operator on a Hilbert space. Then the \emph{essential spectrum} of $T$,  $\sigma_{\textrm{ess}}(T)$, is defined by
	\begin{equation}\label{E:essential via Fredholm}
	\sigma_{\textrm{ess}}(T)=\{\lambda\in \mathbb{C}\mid T-\lambda I \text{ is not a Fredholm operator}\}.
	\end{equation}
If $T$ is self-adjoint, the essential spectrum of $T$ can be equivalently defined as the complement of its discrete spectrum. That is,
	\begin{equation}\label{E:essential via discrete}
	\sigma_{\textrm{ess}}(T)=\sigma(T)\setminus \sigma_{\textrm{discr}}(T),
	\end{equation}
where the discrete spectrum, denoted $\sigma_{\textrm{discr}}(T)$, is the set of isolated eigenvalues of $T$ with finite multiplicity.
\end{definition}

There is a famous classification result due to Weyl, von Neumann, and Berg (Theorem \ref{T:WvB theorem} below) involving the essential spectrum of self-adjoint operators and the multiplicity of their isolated eigenvalues. To state it we need the following definition.
\begin{definition} Two bounded operators $T_1$ and $T_2$ on Hilbert spaces $\mathcal{H}_1$ and $\mathcal{H}_2$ are said to be \emph{approximately unitarily equivalent} (written $T_1\sim_a T_2$) if there is a sequence of unitary isomorphisms of Hilbert spaces $U_n\colon \mathcal{H}_1\to \mathcal{H}_2$ such that $T_1=\lim_{n\to \infty} U_n^*T_2U_n$.
\end{definition}
It is well-known (see for instance~\cite{davidson96}) that if $T_1\sim_a T_2$, then a sequence of unitary operators $\{U_n\}_{n=1}^\infty$ can be chosen such that in addition to $T_1=\lim_{n\to \infty} U_n^*T_2U_n$, one also has that $T_1-U_n^*T_2U_n$ belongs to the ideal $\mathcal{K}(\mathcal{H}_1)$ of compact operators.
\begin{theorem}[Weyl--von Neumann--Berg, {see \cite[Theorem II.4.4]{davidson96}}]\label{T:WvB theorem}
Two self-adjoint operators $M$ and $N$ on separable Hilbert spaces are approximately unitarily equivalent if and only if 
\begin{enumerate}[(i)]
\item $\sigma_{\textup{ess}}(M)=\sigma_{\textup{ess}}(N)$, and
\item $\dim\ker (M-\lambda I)=\dim\ker (N-\lambda I)$ for all $\lambda\in \mathbb{C}\setminus \sigma_{\textup{ess}}(M)$.
\end{enumerate}
\end{theorem}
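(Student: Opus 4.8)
This is the Weyl--von Neumann--Berg classification, which the excerpt cites rather than proves; the plan is to handle the two implications separately, in each case reducing matters to diagonal operators, where the invariants in (i) and (ii) are transparent.

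\textbf{Necessity.} Suppose $M\sim_a N$. I would invoke the refinement quoted just before the theorem to choose unitaries $U_n$ with $K_n:=M-U_n^*NU_n\in\mathcal{K}(\mathcal{H}_1)$ and $K_n\to 0$ in norm. Because unitary conjugation preserves $\sigma_{\mathrm{ess}}$ and compact perturbations preserve it (Weyl's theorem), $\sigma_{\mathrm{ess}}(M)=\sigma_{\mathrm{ess}}(U_n^*NU_n)=\sigma_{\mathrm{ess}}(N)$, which gives (i). For (ii), fix $\lambda\notin\sigma_{\mathrm{ess}}(M)=\sigma_{\mathrm{ess}}(N)$; then $\lambda$ is either in the resolvent set or an isolated eigenvalue of finite multiplicity for both $M$ and $N$. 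I would pick an open disc $O$ around $\lambda$ small enough that $\overline{O}\cap\sigma(M)\subseteq\{\lambda\}$ and $\overline{O}\cap\sigma(N)\subseteq\{\lambda\}$, with $\partial O$ avoiding both spectra. Writing $M_n=U_n^*NU_n\to M$ in norm, the resolvents converge uniformly on $\partial O$, so the Riesz projections $E_{M_n}(O)=\frac{1}{2\pi i}\oint_{\partial O}(z-M_n)^{-1}\,dz$ converge in norm to $E_M(O)=E_M(\{\lambda\})$; norm-close projections have equal rank, so $\operatorname{rank}E_{M_n}(O)=\dim\ker(M-\lambda I)$ for large $n$. Since $M_n$ is unitarily equivalent to $N$, $\operatorname{rank}E_{M_n}(O)=\operatorname{rank}E_N(O)=\dim\ker(N-\lambda I)$ by the choice of $O$, which yields (ii).

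\textbf{Sufficiency.} For the converse I would reduce both operators to a single common diagonal model. The engine is the Weyl--von Neumann diagonalization theorem: every bounded self-adjoint operator is approximately unitarily equivalent to a diagonal one, and in fact to a canonical \emph{standard form} whose diagonal sequence $(\mu_i)$ has closure $\sigma(M)$, set of limit points exactly $\sigma_{\mathrm{ess}}(M)$, and in which each isolated eigenvalue $\lambda$ appears exactly $\dim\ker(M-\lambda I)$ times. Granting (i) and (ii), the data feeding this standard form coincide for $M$ and $N$: I would fix one sequence $(\mu_i)$ listing a countable dense subset of $S:=\sigma_{\mathrm{ess}}(M)=\sigma_{\mathrm{ess}}(N)$ with every listed point repeated infinitely often, together with each isolated eigenvalue included with its common finite multiplicity. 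Then both $M$ and $N$ are approximately unitarily equivalent to the \emph{same} diagonal operator $D=\operatorname{diag}(\mu_i)$, and since $\sim_a$ is symmetric and transitive, $M\sim_a D\sim_a N$ forces $M\sim_a N$.

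\textbf{The main obstacle.} The genuine work, and the step I expect to be hardest, is establishing the Weyl--von Neumann diagonalization together with the ``absorption'' that lands both operators on the \emph{same} model $D$. The diagonalization itself is the classical inductive peeling-off of one near-eigenvector at a time from a spectral subspace over a short interval, making $M$ block-diagonal plus a compact error of arbitrarily small norm; letting the tolerance tend to $0$ produces the unitaries witnessing $M\sim_a D'$ for some diagonal $D'$. Converting $D'$ into the standardized $D$ needs a matching argument: because every point of $S$ is a limit point of the diagonal, the relevant spectral projections are infinite-dimensional, so for each tolerance $\delta$ one can build a bijection of diagonal entries pairing each entry of $D'$ with one of $D$ within $\delta$, realized by a permutation unitary, while the isolated eigenvalues are matched exactly via the multiplicity hypothesis. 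Controlling this combinatorial matching uniformly and checking that the resulting permutations give norm convergence is the crux; by comparison the necessity direction and the algebraic properties of $\sim_a$ are routine.
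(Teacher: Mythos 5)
This is a statement the paper does not prove at all: it is imported verbatim from the cited reference (Davidson, Theorem II.4.4) and used as a black box in Corollary~4.8 and the final remark. So there is no ``paper's proof'' to match your argument against; the relevant comparison is with the standard textbook proof, and your outline follows that route faithfully. Your necessity direction is essentially complete modulo standard facts: the refinement that the $U_n$ can be chosen with $M-U_n^*NU_n$ compact, Weyl's theorem on invariance of $\sigma_{\mathrm{ess}}$ under compact perturbations, and the rank stability of norm-close Riesz projections are all correctly deployed, and you rightly use part (i) before choosing the disc $O$ so that $\overline{O}\cap\sigma(N)\subseteq\{\lambda\}$ makes sense. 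The sufficiency direction is architecturally right as well, and your standardized diagonal model does carry exactly the invariants (i) and (ii) --- note that (i) and (ii) together already force $\sigma(M)=\sigma(N)$, so the model is well defined from the hypotheses.

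That said, you have correctly located but not discharged the two lemmas in which all of the theorem's content lives: the Weyl--von Neumann diagonalization (self-adjoint equals diagonal plus arbitrarily small compact) and the absorption/matching step showing that any two diagonal operators with the same essential spectrum and the same finite multiplicities off it are approximately unitarily equivalent. Each is dispatched in a sentence or two of intent rather than proof, and the matching step in particular needs care at the boundaries of the partitioning intervals and in pairing off the two countably infinite families inside each interval meeting $\sigma_{\mathrm{ess}}$. As a proof proposal this is a sound and honest skeleton of the argument in the source the paper cites, but it is a skeleton: for the purposes of this paper, simply citing \cite[Theorem II.4.4]{davidson96}, as the authors do, is the appropriate move, and completing your sketch would amount to reproducing a chapter of that book.
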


Now we are ready to prove the following corollaries of Theorem \ref{T:infinite bounded lambdasisp} concerning the limit points of the spectrum and the approximate unitary equivalence of any two constructed solutions of a given SISP.

\begin{corollary}\label{C:limit points of spectrum}
Let $T$ be an operator obtained according to the proof of Theorem~$\ref{T:infinite bounded lambdasisp}$ as a solution to the SISP with data $(G, \Lambda)$. Then $\sigma_{\textup{ess}}(T)=\Lambda'$, where $\Lambda'$ is the set of limit points of $\Lambda$.
\end{corollary}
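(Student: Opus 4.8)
The plan is to realize $T$ as a compact perturbation of the truncated operators $T_n$ and then invoke the classical stability of the essential spectrum under compact perturbations (Weyl's theorem, $\sigma_{\mathrm{ess}}(S+K)=\sigma_{\mathrm{ess}}(S)$ for $K$ compact). The most important step is the observation that each difference $T_{k+1}-T_k$ is a \emph{finite-rank} operator. Comparing the matrices $M_k$ and $M_{k+1}$: both equal $\diag(\lambda_{k+2},\lambda_{k+3},\dots)$ on the coordinates indexed by $\{k+2,k+3,\dots\}$ and, being block diagonal, carry no coupling between those coordinates and the first $k+1$; hence $M_{k+1}-M_k$ is supported entirely on the top-left $(k+1)\times(k+1)$ block. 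The series $T-T_n=\sum_{k\ge n}(T_{k+1}-T_k)$ has finite-rank partial sums and converges in operator norm by the bound $\|T_{k+1}-T_k\|_{\mathrm{op}}<2^{-k}$ coming from \eqref{E:norm-difference}, so $T-T_n$ is a norm-limit of finite-rank operators and is therefore compact.

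Next I would compute $\sigma_{\mathrm{ess}}(T_n)$ directly. Since $T_n=\widetilde{A}_n\oplus\diag(\lambda_{n+1},\lambda_{n+2},\dots)$ and the finite block $\widetilde{A}_n$ contributes nothing to the essential spectrum, we have $\sigma_{\mathrm{ess}}(T_n)=\sigma_{\mathrm{ess}}\bigl(\diag(\lambda_{n+1},\lambda_{n+2},\dots)\bigr)$. For a self-adjoint diagonal operator with distinct diagonal entries, $D-\lambda I$ fails to be Fredholm exactly when $\lambda$ is an accumulation point of the diagonal sequence; thus the essential spectrum of the tail equals the set of accumulation points of $\{\lambda_{n+1},\lambda_{n+2},\dots\}$. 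Deleting finitely many terms does not change the accumulation points, so this set coincides with the accumulation points of the full dense sequence $\{\lambda_1,\lambda_2,\dots\}$.

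It then remains to match these accumulation points with $\Lambda'$, which is a short point-set argument using that the $\lambda_i$ are distinct and dense in $\Lambda$: every limit point of $\Lambda$ is approached by infinitely many distinct $\lambda_i$ and so is an accumulation point of the sequence, while every isolated point of $\Lambda$ must itself be one of the $\lambda_i$ and has a neighborhood containing no other $\lambda_j$, hence is not an accumulation point. This gives $\sigma_{\mathrm{ess}}(T_n)=\Lambda'$ for every $n$. Applying Weyl's theorem to the compact perturbation $T=T_n+(T-T_n)$ then yields $\sigma_{\mathrm{ess}}(T)=\sigma_{\mathrm{ess}}(T_n)=\Lambda'$, as desired.

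I expect the main obstacle to be the compactness step, namely recognizing and justifying that the consecutive differences $T_{k+1}-T_k$ are supported on a finite block so that $T-T_n$ is a norm-limit of finite-rank operators. Once compactness is in hand, the essential spectrum is invariant under the passage from $T_n$ to $T$, and the remaining work—identifying the essential spectrum of a diagonal operator with the accumulation points of its entries, and those accumulation points with $\Lambda'$—is routine.
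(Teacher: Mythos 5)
Your proof is correct and follows essentially the same route as the paper: both arguments hinge on realizing $T$ as a compact perturbation of a diagonal operator (you perturb $T_n$, the paper perturbs $\operatorname{diag}(\lambda_1,\lambda_2,\dots)$, in each case via a norm-convergent series of finite-rank differences) and on identifying the essential spectrum of that diagonal operator with the accumulation points of the distinct dense sequence $\{\lambda_i\}$, which equal $\Lambda'$. The only cosmetic difference is that you invoke Weyl's theorem to transfer the essential spectrum in one step, whereas the paper gets the inclusion $\Lambda'\subset\sigma_{\textup{ess}}(T)$ directly from the discrete-spectrum characterization together with $\sigma(T)=\Lambda$, and the reverse inclusion by computing in the Calkin algebra via Atkinson's theorem --- the same compact-perturbation invariance in different clothing.
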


\begin{proof}
Let $T=\lim_{n\to \infty} T_n$ where $\{T_n\}_{n=1}^\infty$ is the sequence of operators defined as in the proof of Theorem~\ref{T:infinite bounded lambdasisp} relative to the countable dense subset $\{\lambda_1, \lambda_2, \dots\}$ of $\Lambda$. The containment $\Lambda'\subset \sigma_{\textrm{ess}}(T)$ is clear from~\eqref{E:essential via discrete} and the fact that $\Lambda=\sigma(T)$, as shown in Theorem~\ref{T:infinite bounded lambdasisp}.  
To prove the reverse inclusion, let $\lambda\in \sigma_{\textrm{ess}}(T)$ be arbitrary. By definition~\eqref{E:essential via Fredholm} and Atkinson's Theorem~\cite[Theorem~5.17]{douglas98} $T-\lambda I$ is not invertible ``modulo compact operators.'' More precisely, if we denote the ideal of compact operators in $\mathcal{B}(\ell^2)$ by $\mathcal{K}(\ell^2)$, then $(T-\lambda I)+\mathcal{K}(\ell^2)$ is not an invertible element of the \emph{Calkin algebra} $\mathcal{B}(\ell^2)/\mathcal{K}(\ell^2)$. Moreover, the equality
	\[
	T-\lambda I=
	[T-\operatorname{diag}(\lambda_1, \lambda_2, \dots)]
	+
	[\operatorname{diag}(\lambda_1, \lambda_2, \dots) -\lambda I]
	\]
implies that 
	\[
	(T-\lambda I)+\mathcal{K}(\ell^2)=\operatorname{diag}(\lambda_1-\lambda, \lambda_2-\lambda, \dots) +\mathcal{K}(\ell^2),
	\]	
because $T-\operatorname{diag}(\lambda_1, \lambda_2, \dots)$ is the limit of the finite-rank operators $T_n-\operatorname{diag}(\lambda_1, \lambda_2, \dots)$, and hence it is compact. To finish the proof of this part, we only need to observe that $\operatorname{diag}(\lambda_1-\lambda, \lambda_2-\lambda, \dots) +\mathcal{K}(\ell^2)$ is noninvertible in the Calkin algebra (if and) only if $0$ is a limit point of $\{\lambda_1-\lambda, \lambda_2-\lambda, \dots\}$. This is equivalent to saying that $\lambda$ is a limit point of $\{\lambda_1, \lambda_2, \dots\}$ in which case $\lambda \in \Lambda'$.
\end{proof}

We mention in passing that if $\Lambda'$  is a singleton, $\Lambda'=\{\lambda\}$, then $T-\lambda I$ is compact. To see this, let $T_n$ and $T$ be as in the proof of Theorem~\ref{T:infinite bounded lambdasisp}. If $\lambda$ is the only limit point of $\Lambda$, then all $T_n-\lambda I$ and consequently their limit in operator norm $T-\lambda I$ are compact operators.

\begin{corollary}\label{C:approximate unitary equivalence}
Let $S$ and $T$ be any two operators obtained according to the proof of Theorem~$\ref{T:infinite bounded lambdasisp}$ as solutions to the SISP with data $(G_1, \Lambda)$ and $(G_2, \Lambda)$, respectively. Then $S$ and $T$ are approximately unitarily equivalent. In particular, $T\sim_a \operatorname{diag}(\lambda_1, \lambda_2, \dots)$ for any countable dense subset $\{\lambda_1, \lambda_2, \dots\}$ of $\Lambda$.
\end{corollary}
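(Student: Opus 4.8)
The plan is to apply the Weyl--von Neumann--Berg theorem (Theorem~\ref{T:WvB theorem}) directly to $S$ and $T$, reducing the claim to the two conditions (i) $\sigma_{\textup{ess}}(S)=\sigma_{\textup{ess}}(T)$ and (ii) $\dim\ker(S-\lambda I)=\dim\ker(T-\lambda I)$ for every $\lambda\notin\sigma_{\textup{ess}}$. Condition (i) is immediate from Corollary~\ref{C:limit points of spectrum}, since both essential spectra equal $\Lambda'$, the set of limit points of $\Lambda$. For condition (ii) I would first dispose of the easy cases: if $\lambda$ is non-real then neither self-adjoint operator has $\lambda$ in its point spectrum, and if $\lambda$ is real with $\lambda\notin\Lambda$ then $\lambda\notin\sigma(S)=\sigma(T)=\Lambda$ and both kernels are trivial. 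This leaves the isolated points $\lambda\in\Lambda\setminus\Lambda'$, where the real content lies.

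At such an isolated point I would show that any operator produced by the construction of Theorem~\ref{T:infinite bounded lambdasisp} has $\lambda$ as an eigenvalue of multiplicity exactly one; applying this to both $S$ and $T$ then yields (ii). Writing $T=\lim_{n\to\infty}T_n$ with $T_n$ the approximating operators from that proof, I would pick $r>0$ so small that the closed disk of radius $r$ about $\lambda$ meets $\Lambda$ only at $\lambda$, and let $\gamma$ be the circle of radius $r/2$. Because $\sigma(T_n)=\Lambda$ for every $n$, the contour $\gamma$ avoids every $\sigma(T_n)$ and $\sigma(T)$, so the resolvents are uniformly bounded on $\gamma$ and
\[
(zI-T_n)^{-1}-(zI-T)^{-1}=(zI-T_n)^{-1}(T-T_n)(zI-T)^{-1}\longrightarrow O
\]
uniformly on $\gamma$ as $n\to\infty$. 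Integrating, the Riesz projections
\[
P_n=\frac{1}{2\pi i}\oint_\gamma (zI-T_n)^{-1}\,dz
\qquad\text{and}\qquad
P=\frac{1}{2\pi i}\oint_\gamma (zI-T)^{-1}\,dz
\]
satisfy $P_n\to P$ in operator norm; since these are orthogonal projections, $\|P_n-P\|<1$ for large $n$ forces $\operatorname{rank}P=\operatorname{rank}P_n$. Finally I would compute $\operatorname{rank}P_n$ directly from $T_n=\widetilde A_n\oplus\operatorname{diag}(\lambda_{n+1},\lambda_{n+2},\dots)$: because $\widetilde A_n$ has the simple spectrum $\{\lambda_1,\dots,\lambda_n\}$ and the diagonal tail omits $\lambda$ once $n$ is past the index of $\lambda$ in the dense set, the eigenvalue $\lambda$ of $T_n$ is simple, so $\operatorname{rank}P_n=1$ and hence $\dim\ker(T-\lambda I)=1$.

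With (i) and (ii) in hand, Theorem~\ref{T:WvB theorem} gives $S\sim_a T$. For the ``in particular'' clause I would observe that the diagonal operator $\operatorname{diag}(\lambda_1,\lambda_2,\dots)$ is itself the output of the construction applied to the \emph{empty} graph (each $\widetilde A_n$ being forced to be diagonal), so $T\sim_a\operatorname{diag}(\lambda_1,\lambda_2,\dots)$ is the special case $G_2=\text{empty graph}$ of what has just been proved. The main obstacle is the multiplicity computation of the second paragraph: everything hinges on knowing $\sigma(T_n)=\Lambda$ for all $n$, so that the fixed contour $\gamma$ is spectrum-free uniformly in $n$; this is what makes the projections $P_n$ converge in norm and lets me transport the elementary finite-dimensional multiplicity count to the limit operator.
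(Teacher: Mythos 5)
Your proof is correct, but it takes a genuinely different route from the paper's. You apply the Weyl--von Neumann--Berg theorem once, directly to the limit operators $S$ and $T$, which forces you to establish condition (ii) for the limits themselves; you do this by showing each isolated point of $\Lambda$ is a simple eigenvalue of $T$ via norm convergence of Riesz projections $P_n\to P$ along a contour that stays uniformly away from $\sigma(T_n)=\Lambda$. The paper instead applies Weyl--von Neumann--Berg only to the finite-stage approximants $S_n$ and $T_n$, where both conditions are immediate because each is the direct sum of an $n\times n$ matrix with simple spectrum and an infinite diagonal matrix; it then transports $S_n\sim_a T_n$ to the limit by choosing, for each $n$, a single unitary $V_n=U_{n,1}$ with $\|S_n-V_n^*T_nV_n\|_{\mathrm{op}}$ dominated by $\max\{\|T_n-T\|_{\mathrm{op}},\|S_n-S\|_{\mathrm{op}},1/n\}$ and running a $3\varepsilon$-argument. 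The trade-off: your approach needs the spectral-projection machinery but delivers the multiplicity-one statement $\dim\ker(T-\lambda I)=1$ for $\lambda\in\Lambda\setminus\Lambda'$ as an intermediate step, whereas the paper only obtains that fact afterward, in the remark following the corollary, as a consequence of $T\sim_a\operatorname{diag}(\lambda_1,\lambda_2,\dots)$; the paper's route stays elementary at the cost of the extra limiting argument on unitaries. One small point worth making explicit in your write-up: when you say the diagonal tail omits $\lambda$ for large $n$, you are using that every isolated point of $\Lambda$ necessarily belongs to the chosen countable dense subset, so $\lambda=\lambda_j$ for some $j$ and the tail $\{\lambda_{n+1},\lambda_{n+2},\dots\}$ neither contains $\lambda$ nor accumulates at it once $n\ge j$.
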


\begin{proof}
Let $S_n$ and $T_n$ be defined as in the proof of Theorem~\ref{T:infinite bounded lambdasisp} such that $S=\lim_{n\to \infty} S_n$ and $T=\lim_{n\to \infty} T_n$. Then for all $n$, 
\begin{itemize}
\item $\sigma_{\textup{ess}}(S_n)=\sigma_{\textup{ess}}(T_n)=\Lambda'$, and 
\item $\dim\ker (S_n-\lambda I)=\dim\ker (T_n-\lambda I)=1$ for all $\lambda\in \Lambda\setminus \Lambda'$,
\end{itemize}
because, by construction, each of $S_n$ and $T_n$ can be realized as the direct sum of an $n\times n$ matrix and an infinite diagonal matrix for which verifying these two conditions is straightforward. Thus Theorem~\ref{T:WvB theorem} implies $S_n\sim_a T_n$. For each $n$, let $\{U_{n, k}\}_{k=1}^\infty$ denote the sequence of unitary operators that satisfy $S_n=\lim_{k\to \infty} U_{n, k}^*T_nU_{n, k}$ and in addition, by passing to a subsequence of $\{U_{n, k}\}_{k=1}^\infty$ if necessary, we can arrange that when $k=1$ we have
	\[
	\|S_n-U_{n, 1}^*T_nU_{n, 1}\|_{\text{op}}
	<\max
	\left\{
		\|T_n-T\|_{\text{op}}, \|S_n-S\|_{\text{op}}, \frac{1}{n}
	\right\}.
	\]
Define the sequence of unitary operators $\{V_n\}_{n=1}^\infty$ by setting $V_n=U_{n, 1}$. We wish to prove that $S=\lim_{n\to \infty} V_n TV_n^*$. This is accomplished by a $3\varepsilon$--argument as follows. Given $\varepsilon>0$, choose $N>0$ large enough such that
\[
\max
\left\{
	\|T_n-T\|_{\text{op}}, \|S_n-S\|_{\text{op}}, \frac{1}{n}
\right\}
<\varepsilon
\]
whenever $n>N$. Then for any such $n$,
	\begin{small}
	\begin{align*}
	\|S-V_n^*TV_n\|_{\text{op}}
	&\le
	\|S-S_n\|_{\text{op}}+\|S_n-V_n^*T_nV_n\|_{\text{op}}+\|V_n^* T_nV_n-V_n^* T V_n\|_{\text{op}}
	\\
	&=
	\|S-S_n\|_{\text{op}}+\|S_n-U_{n, 1}^*T_nU_{n, 1}\|_{\text{op}}+\|T_n-T\|_{\text{op}}
	\\
	&< 3\max
		\left\{
			\|T_n-T\|_{\text{op}}, \|S_n-S\|_{\text{op}}, \frac{1}{n}
		\right\}
	\\
	&<	3\varepsilon.
	\end{align*}
	\end{small}
Since $\varepsilon>0$ is arbitrary, our claim is proved.
\end{proof}
``Approximate unitary equivalence'' cannot be replaced by the stronger notion of ``unitary equivalence'' in Corollary~\ref{C:approximate unitary equivalence} as the following simple example shows. Let $G$ be the empty graph on countably many vertices and let $\Lambda$ be the compact set $\{\frac{1}{n}\mid n\in \mathbb{N}\}\cup\{0\}$. Then our construction in the proof of Theorem~\ref{T:infinite bounded lambdasisp} produces the following solutions for the SISP with data $(G, \Lambda)$:
	\begin{equation}\label{E:non-equivalent diagonal solutions}
	\operatorname{diag}\left(1, \frac{1}{2}, \frac{1}{3} \dots\right)
	\text{ and }
	\operatorname{diag}\left(0, 1, \frac{1}{2}, \dots\right)
	\end{equation}
corresponding to two different choices of countable dense subsets of $\Lambda$, namely $\Lambda \setminus \{0\}$ and $\Lambda$. The two operators in \eqref{E:non-equivalent diagonal solutions} are approximately unitarily equivalent but clearly not unitarily equivalent. 

\begin{remark} Corollary~\ref{C:approximate unitary equivalence} immediately implies that if $T$ is a solution of the SISP with data $(G_1, \Lambda_1)$ constructed as above, then any isolated eigenvalue of $T$ has multiplicity one. To see this it suffices to let $G_2$ be the empty graph on countably many vertices and let $\Lambda_2=\Lambda_1$ so that a solution to the SISP with data $(G_2, \Lambda_2)$ is the diagonal operator $S=\operatorname{diag}(\lambda_1, \lambda_2, \dots)$ for a countable dense subset $\{\lambda_1, \lambda_2, \dots\}$ of $\Lambda$. Since $S\sim_a T$ by Corollary~\ref{C:approximate unitary equivalence}, an application of Theorem~\ref{T:WvB theorem} to the pair of self-adjoint operators $S$ and $T$ gives 
	\[
	\dim\ker (T-\lambda I)=1 \text{ for all } \lambda\in \sigma(T)\setminus \sigma_{\textup{ess}}(T)=\Lambda\setminus \Lambda'.
	\]
\end{remark}
\section{Conclusion}
Motivated by some problems in science and engineering such as the inverse Sturm-Liouville problem, in this manuscript we have introduced a structured inverse spectrum problem (SISP) for infinite graphs and have shown the existence of a (non-unique) solution for this problem. We have used the Jacobian method to establish a procedure for controlling the norm of the solutions of the finite problem in a way that the constructed sequence of solutions for finite cases converges to a solution of the infinite problem. We have also shown that any two solutions of the SISP constructed by our method are approximately unitarily equivalent.

\section{Acknowledgement}
The research of the first author is supported by the Natural Sciences and Engineering Research Council of Canada.

\vfill
\bibliographystyle{plain}
\bibliography{ref151217}

\end{document}